\newtheorem{lem}{Lemma}[section]
\newtheorem{definition}[lem]{Definition}
\newtheorem{theorem}[lem]{Theorem}
\newtheorem{prop}[lem]{Proposition}
\newtheorem{cor}[lem]{Corollary}
\newtheorem{prob}{Problem}
\DeclareMathOperator{\CL}{CL}
\DeclareMathOperator{\cl}{cl}
\DeclareMathOperator{\Pseudo}{Pseudo}
\begin{document}


\baselineskip=17pt



\title[SMALL MAD FAMILIES]{SMALL MAD FAMILIES WHOSE ISBELL-MRÓWKA SPACES ARE PSEUDOCOMPACT}

\author[V. O. Rodrigues]{Vinicius de Oliveira Rodrigues}
\address{Institute of Mathematics and Statistics\\ University of São Paulo\\
Rua do Matão, 1010, São Paulo, Brazil}
\email{vinior@ime.usp.br}

\author[A. H. Tomita]{Artur Hideyuki Tomita}
\address{Institute of Mathematics and Statistics\\ University of São Paulo\\
	Rua do Matão, 1010, São Paulo, Brazil}
\email{tomita@ime.usp.br}

\date{}

\begin{abstract}
	Given a countable transitive model $M$ for ZFC+CH, we prove that one can produce a maximal almost disjoint family in $M$ whose Vietoris Hyperspace of its Isbell-Mrówka space is pseudocompact on every Cohen extension of $M$. We also show that a classical example of $\omega_1$-sized maximal almost disjoint family obtained by a forcing iteration of length $\omega_1$ in a model of non CH  is such that the Vietoris Hyperspace of its Isbell-Mrówka space is pseudocompact. 

\end{abstract}

\subjclass[2010]{Primary 54B20, 54D80. Secondary 54D20, 54A35}

\keywords{hyperspaces, pseudocompactness, MAD families, forcing}

\maketitle

\section{Introduction}
The aim of this article is to give some examples of MAD families whose hyperspace of their Isbell-Mrówka spaces are pseudocompact by the use of forcing. Whether such spaces exist under the axioms of ZFC is a open problem, and after reviewing the literature, we concluded that the only positive consistent result is that under $\mathfrak p=\mathfrak c$, this is true for every MAD family. Nothing was known about the existence of a model where there are two MAD families, one whose Vietoris Hyperspace of its Isbell-Mrówka spaces is pseudocompact and the other is not. Our examples will come from MAD families of cardinality less than $\mathfrak c$, and such examples were not found in the literature.  

The study of topologies on collections of subsets of a given topological space started in the beginning of the past century. The first steps in this direction may be found in the first edition of \cite{hausdorff1927}, dating from 1915, where F. Hausdorff defines a metric on $\CL(X)=\{F\subseteq X: \emptyset \neq F \text{ is closed}\}$, where $X$ is a bounded metric space. 

The definition below is due to Leopold Vietoris and dates from the 1920's \cite{vietoris1922}.

\begin{definition}Given a topological space $X$, $\CL(X)$ is the collection of non-empty closed subsets of $X$.
	
	If $U_0, \dots U_n$ are open subsets of $X$, we define $\langle U_0, \dots, U_n\rangle=\{F \in \CL(X): F\subseteq \bigcup_{i\leq n}U_i, U_i\cap F\neq \emptyset ,  \forall i\leq n \}$. These sets forms a base for a topology, which is called the Vietoris Topology, and the space is called the Vietoris Hyperspace.
\end{definition}

In this article, we will simply refer to it as the hyperspace of $X$. We recall that this topology coincides with the topology generated by the Hausdorff metric whenever $X$ is a compact metric space.

We refer to E. Michael's work (\cite{michael1951}, 1951) for a collection of basic results regarding the Vietoris Topology. It is noticeable that the topological properties of $X$ have many relations to the topological properties of $\CL(X)$. In particular, there is a theorem due to Vietoris which states that $X$ is compact iff $\CL(X)$ is compact. This strong relationship induces natural questions about the existence of relationships between $X$ and $\CL(X)$ having properties that generalizes compactness, such as countable compactness, pseudocompactness\footnote{A topological space is pseudocompact if every continuous function from $X$ into $\mathbb R$ is bounded. There are other definitions for pseudocompactness that are only equivalent for Tychonoff spaces. $\CL(X)$ is not always Tychonoff even if $X$ is, however, if $X$ is Tychonoff, the standard definitions of pseudocompactness becomes equivalent for $\CL(X)$ \cite{ginsburg1977}} and the Lindelöf property.

In 1977, J. Ginsburg proved that, if $X$ is a Tychonoff and every power of $X$ is countably compact, then $\CL(X)$ is countably compact, and that if $\CL(X)$ is countably compact, then so is every finite power of $X$. Likewise, he has proven that if $\CL(X)$ is pseudocompact, then so is every finite power of $X$, and asked whether there is some relationship between the countable compactness (resp. pseudocompactness) of $X^\omega$ and of $\CL(X)$ \cite{ginsburg1977}.

While exploring Ginsburg's question, in 2004, J. Cao, T. Nogura and A. Tomita showed that under $\mathfrak p = \mathfrak c$, there exists a space $X$ such that $X^\kappa$ is countably compact for every $\kappa <2^{\mathfrak c}$ but its hyperspace is not countably compact. They also showed that if $X$ is a Tychonoff homogeneous space, if $\CL(X)$ is countably compact (resp. pseudocompact), then so is $X^\omega$ \cite{tomita2004}.

In 2007, M.  Hru\v{s}ák, F. Hernández-Hernández e I. Martínez-Ruiz explored this question on pseudocompactness by analyzing the Isbell-Mrówka spaces \cite{hrusak2007}.

\begin{definition}An almost disjoint family is an infinite collection of infinite subsets of $\omega$ whose intersections are pairwise finite. A MAD family (maximal almost disjoint) is a maximal almost disjoint family in the $\subseteq$ sense.
	
	Given an almost disjoint family $\mathcal A$, the Isbell-Mrówka space of $\mathcal A$, also called psi space of $\mathcal A$ is denoted by $\Psi(\mathcal A)$ is the set $\mathcal A\cup \omega$ topologized as follows: $\omega$ is open and discrete and $\{\{A\}\cup (A\setminus n): n \in \omega\}$ is a local basis for $A \in \mathcal A$.
	
\end{definition}
$\Psi(\mathcal A)$ is a Tychonoff, locally compact, zero dimensional, first countable noncompact topological space. A nice source of information about this class of spaces is the survey \cite{hrusak2014}. It is well known that MAD families of cardinality $\mathfrak c$ exists.

An almost disjoint family $\mathcal A$ is a MAD family iff $\Psi(\mathcal A)$ is pseudocompact iff $\Psi(\mathcal A)^\omega$ is pseudocompact. This way, by the theorems proved by Ginsburg, if $\mathcal A$ is an almost disjoint family such that $\CL(\Psi(\mathcal A))$ is pseudocompact, then $\mathcal A$ is MAD and $\Psi(\mathcal A)^\omega$ is pseudocompact. So, in the context of Isbell-Mrówka spaces,  Ginsburg's question becomes ``If $\mathcal A$ is a MAD family, is $\CL(\Psi(\mathcal A))$ pseudocompact?''. 

In \cite{hrusak2007}, M. Hru\v sák et al. proved that the answer to this question is independent of ZFC. Inspired by these techniques, they constructed a subspace $X$ of $\beta \omega$ such that $X^\omega$ is pseudocompact but $\CL(X)$ is not. In \cite{underreview}, we construct a subspace $X$ of $\beta \omega$ such that $X^\omega$ is countably compact but $\CL(X)$ is not pseudocompact. However, the following question, asked in \cite{hrusak2007} and also found in \cite{hrusak2014} remains open:

\begin{prob}[ZFC] Is there a MAD family $\mathcal A$ such that $\CL(\Psi(\mathcal A))$ is pseudocompact?
\end{prob}

The only previously known example of a pseudocompact hyperspace is under $\mathfrak p = \mathfrak c$, and our example is in models of $\mathfrak p= \omega_1< \mathfrak c$.

 For basic notation and theorems regarding this subject we refer to \cite{kunen2011set}, \cite{kunen1980set} and \cite{jech1978set}. We recall the definition of some of the cardinal characteristics of the continuum and some of their related concepts defined as in \cite{Blass}:

\begin{definition}We say that a collection $\mathcal A\subseteq [\omega]^{\omega}$ has the strong finite intersection property (SFIP) if the intersection of a nonempty finite subcollection of $\mathcal A$ has infinite intersection. A pseudointersection of $\mathcal A$ is an infinite set $B$ such that $B\subseteq^*A$ for every $A \in \mathcal A$.
	
	$\mathfrak p$, the pseudointersection number, is defined as the smallest cardinal $\kappa$ such that there exists a collection $\mathcal A\subseteq [\omega]^{\omega}$ such that $|\mathcal A|=\kappa$, $\mathcal A$ has the SFIP but has no pseudointersection.

	We say that a collection $\mathcal A\subseteq [\omega]^{\omega}$ is open dense if:
	
	\begin{itemize}
		\item $\forall A \in [\omega]^{\omega} \forall B \in \mathcal A\,(B\subseteq^* A\rightarrow B \in A)$, and
		\item $\forall A \in [\omega]^{\omega} \exists  B \in \mathcal A\, (B\subseteq^* A)$.
	\end{itemize}
	
	$\mathfrak h$, the distributivity number, is defined as the smallest size of a nonempty collection of open dense sets whose intersection is empty.
	
	Finally, $\mathfrak a$ is the smallest cardinality of a MAD family.
\end{definition}

It is known that $\omega_1\leq p\leq \mathfrak h\leq \mathfrak a\leq \mathfrak c$, that Martin's Axiom implies that $\mathfrak p=\mathfrak c$, and that all possible strictly inequalities are consistent. As stated above, the authors of \cite{hrusak2007} proved the following two theorems:

\begin{theorem}\label{hrusak}
	Under $\mathfrak p=\mathfrak c$, the hyperspace of the psi space of every mad family is pseudocompact.
\end{theorem}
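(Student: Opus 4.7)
Pseudocompactness of $\CL(\Psi(\mathcal A))$ amounts to every sequence of non-empty open sets having a cluster point. Given such a sequence $(\mathcal V_n)_n$, I would refine each $\mathcal V_n$ to a basic Vietoris set $\langle U_0^n,\dots,U_{k_n}^n\rangle$ and, in each $U_i^n$, pick an integer $x_i^n$ (every non-empty open set in $\Psi(\mathcal A)$ meets $\omega$, since basic neighborhoods of $A\in\mathcal A$ are of the form $\{A\}\cup(A\setminus m)$ and contain the cofinite tail of $A$). Setting $T_n=\{x_i^n:i\le k_n\}\subseteq\omega$ yields a finite test element $T_n\in\mathcal V_n$, and the task reduces to finding $F\in\CL(\Psi(\mathcal A))$ such that every basic Vietoris neighborhood of $F$ contains $T_n$ for infinitely many $n$.

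I would then split into two cases according to whether $(|T_n|)_n$ is bounded. In the bounded case, thin so that $|T_n|=k$ is constant, enumerate $T_n=\{t_0^n<\cdots<t_{k-1}^n\}$, and thin further so that each coordinate sequence $(t_i^n)_n$ is either eventually constant with value $c_i$ or strictly increases to $\infty$. For each coordinate $i$ going to $\infty$, iteratively apply maximality: the image $\{t_i^n:n\}$ is infinite, so some $A_i\in\mathcal A$ meets it in infinitely many $n$; restrict the index set and continue. Along the resulting infinite $M$, the closed finite set $F=\{c_i:i\in I\}\cup\{A_i:i\notin I\}$ is a cluster point, verified by checking basic Vietoris neighborhoods of $F$ directly. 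This case uses only the MAD-ness of $\mathcal A$.

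The substantive case is $|T_n|\to\infty$, where any cluster point must contain infinitely many elements of $\mathcal A$. The plan is to construct countably many $A_j\in\mathcal A$ and an infinite index set $M$ via a recursion: at each stage $j$, use MAD-ness to choose $A_j$ so that along a shrinking infinite index set $M_j\subseteq M_{j-1}$ the sets $T_n$ distribute more fully among $A_0\cup\cdots\cup A_j$. A pseudointersection $M$ of $\{M_j:j\in\omega\}$ serves as the limiting index set, and $F=\{A_j:j\in\omega\}$ is the candidate cluster point. To verify that an arbitrary basic Vietoris neighborhood $\langle W_0,\dots,W_l\rangle$ of $F$ contains $T_n$ for infinitely many $n\in M$, one uses that $\mathcal A\subseteq\bigcup W_i$ combined with MAD-ness forces $\omega\setminus\bigcup W_i$ to be finite (so $T_n\subseteq\bigcup W_i$ eventually) while the captures $t_j^n\in A_j$ ensure $T_n\cap W_i\ne\emptyset$ for each $i$.

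The main obstacle is the bookkeeping at each stage: to cover every basic neighborhood of the eventual $F$, one must arrange countably many conditions simultaneously, each depending on accumulated choices. The assumption $\mathfrak p=\mathfrak c$---which also gives $\mathfrak a=\mathfrak c$ and ample cofinality within $\mathcal A$---is what enables the diagonalization to be completed through all stages, allowing the construction to navigate the continuum-sized MAD family while controlling the refinements of the $M_j$'s. The independence results of Hru\v s\'ak et al.\ confirm this strength is essential: without $\mathfrak p=\mathfrak c$ there exist MAD families for which no such construction can succeed.
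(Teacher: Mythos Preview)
The paper does not prove this theorem; it is quoted from Hru\v{s}\'ak, Hern\'andez-Hern\'andez and Mart\'inez-Ruiz \cite{hrusak2007} and used only as background. So there is no ``paper's own proof'' to compare against, and I can only assess your argument on its merits.

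Your bounded case is fine, but the unbounded case has a genuine gap. You take the candidate cluster point to be the \emph{countable} closed set $F=\{A_j:j\in\omega\}$ and then, when checking containment for a basic neighborhood $\langle W_0,\dots,W_l\rangle$ of $F$, you assert that ``$\mathcal A\subseteq\bigcup_i W_i$ combined with MAD-ness forces $\omega\setminus\bigcup_i W_i$ to be finite.'' But a neighborhood of $F$ only guarantees $F\subseteq\bigcup_i W_i$, not $\mathcal A\subseteq\bigcup_i W_i$. Concretely, $W=\bigcup_{j}\bigl(\{A_j\}\cup A_j\bigr)$ is open and $\langle W\rangle$ is a basic neighborhood of $F$; for $T_n\in\langle W\rangle$ you would need $T_n\subseteq\bigcup_j A_j$, and nothing in your recursion (``distribute more fully'') secures this. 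In the regime $|T_n|\to\infty$ with no finite subfamily of $\mathcal A$ covering infinitely many $T_n$, the $T_n$'s will typically meet uncountably many members of $\mathcal A$, so a countable $F$ cannot work. Compare Lemma~\ref{lemma4}: the accumulation point there is $\mathcal A_1$, which is co-countable in $\mathcal A$, and precisely this largeness is what makes the containment argument go through.

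Relatedly, you have not actually located where $\mathfrak p=\mathfrak c$ is used. The recursion you describe has only $\omega$ stages and the sets $M_j$ are $\subseteq$-decreasing, so a pseudointersection $M$ exists in ZFC; the closing paragraph about ``navigating the continuum-sized MAD family'' is a gesture, not an argument. In the Hru\v{s}\'ak et al.\ proof the hypothesis $\mathfrak p=\mathfrak c$ enters through a transfinite construction of length $\mathfrak c$ in which pseudointersections are taken at fewer-than-$\mathfrak c$ many earlier stages; the cluster point that emerges is a large (co-small) subset of $\mathcal A$, not a countable one. Your outline would need to be reorganized around that idea before it can be completed.
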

\begin{theorem}\label{hrusak2}
	Under $\mathfrak h<\mathfrak c$, there exists a mad family whose hyperspace of its psi space is not pseudocompact.
\end{theorem}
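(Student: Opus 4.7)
My plan has two parts. First, I would establish a combinatorial characterization of pseudocompactness of $\CL(\Psi(\mathcal A))$; second, I would construct a MAD family failing this characterization, using $\mathfrak h < \mathfrak c$.

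For the characterization, consider a pairwise disjoint sequence $(F_n)_{n<\omega}$ of nonempty finite subsets of $\omega$. Each singleton $\{F_n\} = \langle \{k\} : k \in F_n \rangle$ is an isolated point of $\CL(\Psi(\mathcal A))$, so pseudocompactness forces an accumulation point $G \in \CL(\Psi(\mathcal A))$. Examining Vietoris neighborhoods one sees $G \cap \omega = \emptyset$ (since each singleton $\{k\}$ is a neighborhood meeting at most one $F_n$), so $G \subseteq \mathcal A$. Analyzing basic neighborhoods $\langle W_0,\ldots,W_m\rangle$ of $G$ in which the $W_i$'s correspond to a finite partition of $G$, one derives the condition that $\bigcup G$ is cofinite in $\omega$ and that for every finite partition $G = S_0 \sqcup \cdots \sqcup S_m$ into nonempty pieces, infinitely many $F_n$ meet each $\bigcup S_i$. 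Pseudocompactness is then equivalent to the existence of such a $G$ for every $(F_n)$, after reducing arbitrary sequences of nonempty basic open sets in $\CL(\Psi(\mathcal A))$ to disjoint-finite sequences of this form.

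Under $\mathfrak h < \mathfrak c$, I would construct $\mathcal A$ violating this. Fix $\omega = \bigsqcup_n F_n$ with $|F_n|\to\infty$ and require every element of $\mathcal A$ to lie in the ideal $\mathcal T^* = \{A \in [\omega]^\omega : \{n : |A \cap F_n| \geq 2\} \text{ is finite}\}$ of almost-transversals. Since $\mathcal T^*$ is open dense in $([\omega]^\omega,\subseteq^*)$, a MAD family contained in $\mathcal T^*$ always exists. The key property of $\mathcal T^*$ is that $|F_n \cap (A_1 \cup \cdots \cup A_k)| \leq k$ for almost all $n$, so $F_n \subseteq A_1 \cup \cdots \cup A_k$ fails once $|F_n| > k$; this kills every \emph{finite} subfamily of $\mathcal A$ as a possible witness $G$. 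To also kill every \emph{infinite} $G \subseteq \mathcal A$ with $\bigcup G$ cofinite, I would carry out a transfinite recursion of length $\mathfrak c$ guided by a Balcar--Pelant--Simon base matrix $(\mathcal M_\alpha)_{\alpha < \mathfrak h}$: at each stage, pick the next element of $\mathcal A$ from a base-matrix level that refines all partitions already produced, thereby arranging that every infinite $G$ that will ever sit inside $\mathcal A$ admits a finite partition making almost all $F_n$ monochromatic in the induced coloring.

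The main obstacle is this last diagonalization. The hypothesis $\mathfrak h < \mathfrak c$ is essential precisely because it bounds the ``complexity'' of admissible subfamilies $G$ via the base matrix: after $\mathfrak h$-many refinement levels every $G$ compatible with the construction is coded by bounded data, so the $\mathfrak c$-long recursion has enough room to preclude each potential accumulation. In contrast, under $\mathfrak p = \mathfrak c$ the base matrix collapses to a trivial structure and the strategy breaks down, consistent with Theorem \ref{hrusak}.
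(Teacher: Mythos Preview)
The paper does not prove this theorem; it is quoted from \cite{hrusak2007}, and the only information given about the original argument is that the witnessing MAD family lives on the base set $2^{<\omega}$ and consists entirely of chains and antichains. So there is no detailed proof in the paper to compare against, only that one-line description of the construction.

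Your overall strategy---fix a partition $\omega=\bigsqcup_n F_n$ with $|F_n|\to\infty$, build $\mathcal A$ inside the almost-transversals, and show that $(F_n)$ has no accumulation point in $\CL(\Psi(\mathcal A))$---matches the shape of the construction in \cite{hrusak2007}: their levels $2^n$ play the role of your $F_n$, and branches of $2^\omega$ are exact transversals. Your elimination of finite $G\subseteq\mathcal A$ via the bound $|F_n\cap\bigcup G|\le |G|$ for large $n$ is correct and is precisely why transversal-type elements are used.

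Two real gaps remain. First, the accumulation-point characterization is inaccurate: the partition-type neighborhoods $\langle W_0,\dots,W_m\rangle$ you describe only force ``$F_n\subseteq\bigcup G$ for infinitely many $n$'', not ``$\bigcup G$ is cofinite'', and you have ignored the neighborhoods $\langle\Psi(\mathcal A)\setminus(\{B\}\cup B)\rangle$ for $B\in\mathcal A\setminus G$. A closed $G\subseteq\mathcal A$ with $\bigcup G$ coinfinite may still be an accumulation point, so refuting your stated condition does not suffice. Second, and this is the essential problem, step~3 is a gesture rather than a plan. You face $2^{\mathfrak c}$ candidate infinite $G$'s against a recursion of length $\mathfrak c$, and the phrases ``coded by bounded data'' and ``monochromatic in the induced coloring'' carry no content as written. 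In the actual \cite{hrusak2007} construction the base-matrix tree is not used to diagonalize against candidate $G$'s one at a time; it imposes a global tree structure on the family so that \emph{every} $G$ fails for a uniform structural reason. You need to identify and articulate that structural mechanism, not a per-$G$ bookkeeping scheme that cannot possibly close.
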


Since $\mathfrak p=\mathfrak c$ implies $\mathfrak a=\mathfrak c$, theorem \ref{hrusak} only talks about mad families of cardinality $\mathfrak c$. The example constructed by the authors to prove theorem \ref{hrusak2} is also a mad family of cardinality $\mathfrak c$ since it is a mad family over the base set $2^{<\omega}$ whose every element is a chain or an antichain, and such a mad family must always have cardinality $\mathfrak c$. 

The authors mentioned that it was not clear from $\mathfrak h<\mathfrak c$ if there would exist some mad family whose Isbell-Mr\' owka space has its hyperspace pseudocompact. In this article, we give examples of models of $\mathfrak h < \mathfrak c$ in which  there exist small mad families whose hyperspaces of their psi spaces are pseudocompact.

\section{A criterion for pseudocompactness}

In this section, we aim to show that in order to verify that a space $\CL(\Psi(\mathcal A))$ is pseudocompact, we only need to verify that certain sequences have accumulation points. The criterion is the following proposition:

\begin{prop}\label{criterion}Suppose $\mathcal A$ is an almost disjoint family. Then $\CL(\Psi(\mathcal A))$ is pseudocompact if and only if for every sequence $C:\omega\rightarrow [\omega]^{<\omega}\setminus \{\emptyset\}$ of pairwise disjoint elements has an accumulation point in $\CL(\Psi(\mathcal A))$.
\end{prop}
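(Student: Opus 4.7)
The plan is to use that the finite nonempty subsets of $\omega$ form a dense set of isolated points of $\CL(\Psi(\mathcal A))$: since $\omega$ is open and discrete in $\Psi(\mathcal A)$, a direct check gives $\langle \{c\} : c \in F\rangle = \{F\}$ for any finite nonempty $F \subseteq \omega$, and density follows from the density of $\omega$ in $\Psi(\mathcal A)$ (any basic $\langle U_0, \dots, U_k\rangle$ with nonempty $U_i$'s contains $\{c_0, \dots, c_k\}$ for any $c_i \in U_i \cap \omega$). For $(\Rightarrow)$, this is essentially immediate: the $C_n$'s give a sequence of nonempty open singletons $(\{C_n\})_n$, so pseudocompactness (via the standard equivalence with ``every sequence of nonempty open sets has an accumulation point'', valid for $\CL(\Psi(\mathcal A))$ per the footnote) yields a point $F$ every neighborhood of which contains $C_n$ for infinitely many $n$.

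For $(\Leftarrow)$, given a sequence $(V_n)$ of nonempty open subsets of $\CL(\Psi(\mathcal A))$, pick by density a finite nonempty $C_n \subseteq \omega$ with $C_n \in V_n$ for each $n$. Apply the $\Delta$-system lemma to obtain a subsequence $(C_{n_k})$ forming a $\Delta$-system with root $R \subseteq \omega$. If $C_{n_k} = R$ infinitely often, then $R$ itself accumulates $(V_n)$ (every neighborhood of $R$ contains $R \in V_{n_k}$). Otherwise, passing to a further subsequence we may assume that $C_k' := C_{n_k} \setminus R$ is nonempty for all $k$, giving a pairwise disjoint sequence of nonempty finite subsets of $\omega$, and the hypothesis produces an accumulation point $F^* \in \CL(\Psi(\mathcal A))$ of $(C_k')$.

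The crux is then verifying that $F^* \cup R$ accumulates $(V_n)$. Given a basic neighborhood $\langle W_0, \dots, W_l\rangle$ of $F^* \cup R$, let $J = \{j : F^* \cap W_j \neq \emptyset\}$ (nonempty, since $F^* \neq \emptyset$ and $F^* \subseteq \bigcup_j W_j$). Then $\langle W_j : j \in J\rangle$ is a basic neighborhood of $F^*$, because $F^* \cap W_j = \emptyset$ for $j \notin J$ forces $F^* \subseteq \bigcup_{j \in J} W_j$; so it contains $C_k'$ for infinitely many $k$. For each such $k$, $C_{n_k} = R \cup C_k'$ sits in the original neighborhood: the indices $j \in J$ are hit by $C_k'$, while for $j \notin J$ the requirement $(F^* \cup R) \cap W_j \neq \emptyset$ combined with $F^* \cap W_j = \emptyset$ forces $R \cap W_j \neq \emptyset$, so these indices are hit by $R \subseteq C_{n_k}$. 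The only subtle point is this bookkeeping around the common root $R$ introduced by the $\Delta$-system; the remaining verifications are routine.
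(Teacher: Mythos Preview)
Your argument for $(\Rightarrow)$ is fine and matches the paper's. The gap is in $(\Leftarrow)$: the $\Delta$-system lemma does \emph{not} hold for countably many finite sets. Take $C_n=\{0,1,\dots,n\}$; for any infinite $I\subseteq\omega$ and any $m<n$ in $I$ one has $C_m\cap C_n=C_m$, so the pairwise intersections are not constant and no infinite $\Delta$-subsystem exists. The classical lemma needs a regular uncountable index set (or, at the very least, a uniform bound on $|C_n|$, which you did not arrange and cannot always arrange).

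The paper circumvents exactly this obstruction. Its Lemma~\ref{lemma1} is a substitute for the $\Delta$-system lemma tailored to $\omega$-sequences: instead of a fixed finite root $R$, it produces a decomposition $S(n)=U(n)\,\dot\cup\,D(n)$ along an infinite $I$ where the $D(n)$ are pairwise disjoint and the $U(n)$ are \emph{increasing} rather than constant. The role your root $R$ plays is taken over by $F=\cl\bigcup_{n\in I'}U(n)$, which may be infinite; the accumulation point is then $F\cup F'$ where $F'$ accumulates the disjoint parts, and the special case $D(n)\equiv\emptyset$ (where $S|I$ is increasing and converges to $F$) replaces your ``$C_{n_k}=R$ infinitely often'' case. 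Your verification that $F^*\cup R$ works is essentially the same bookkeeping the paper does for $F\cup F'$, so once you replace the $\Delta$-system step by this increasing-root decomposition, the rest of your proof goes through. If you prefer to keep the $\Delta$-system, you would first have to split into the case where $(|C_n|)$ has a bounded subsequence (where your argument is correct) and the unbounded case, which still needs an idea like Lemma~\ref{lemma1}.
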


This criterion appears in \cite{underreview} which is yet unpublished. For the sake of completeness, we sketch its proof here. First, we will need the following lemma that only talks about sequences of finite sets.

\begin{lem}\label{lemma1}Let $S$ be a sequence of finite sets. Then there exists $I \in [\omega]^\omega$ and sequences $U, D$ such that for every $n \in I$, $S(n)=U(n)\dot\cup D(n)$ and for every $n, m \in I$, if $n<m$ then $U(n)\subseteq U(m)$ and $D(n)\cap D(m)=\emptyset$.
\end{lem}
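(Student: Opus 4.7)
I would first pass to a subsequence along which $S$ has a clean asymptotic structure, then perform two successive thinnings. By compactness of the Cantor space $\{0,1\}^\omega$, there exist an infinite $J \subseteq \omega$ and a set $F \subseteq \omega$ such that for every $k \in F$, $k \in S(n)$ for cofinitely many $n \in J$, and for every $k \notin F$, $k \in S(n)$ for only finitely many $n \in J$ (i.e., the characteristic functions of $S(n)$ converge pointwise to the characteristic function of $F$ along $J$). My decomposition will be $U(n) = F \cap S(n)$ and $D(n) = S(n) \setminus F$; I just need to thin $J$ twice to make $U$ increasing and $D$ pairwise disjoint.

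The first thinning produces $J' \subseteq J$ along which $F \cap S(n)$ is $\subseteq$-increasing: inductively, having picked $n_0 < \dots < n_{l-1}$ in $J$, the finite set $F \cap S(n_{l-1})$ is a subset of $F$, hence contained in $S(n)$ for cofinitely many $n \in J$, so I can pick $n_l \in J$ beyond $n_{l-1}$ with $F \cap S(n_{l-1}) \subseteq S(n_l)$, which forces $F \cap S(n_{l-1}) \subseteq F \cap S(n_l)$. The second thinning produces $I \subseteq J'$ along which $S(n) \setminus F$ is pairwise disjoint: inductively, having picked $n_0 < \dots < n_{s-1}$ in $J'$, the set $E = \bigcup_{t < s}(S(n_t) \setminus F)$ is a finite subset of $\omega \setminus F$, so each of its elements lies in $S(n)$ for only finitely many $n \in J'$, and thus all but finitely many $n \in J'$ beyond $n_{s-1}$ satisfy $S(n) \cap E = \emptyset$; any such $n$ serves as $n_s$.

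With $U$, $D$ as above and $I$ the final subsequence, the decomposition $S(n) = U(n) \dot\cup D(n)$ holds by definition, $U$ is $\subseteq$-increasing along $I$ (the property from $J'$ is preserved because $I \subseteq J'$), and $D$ is pairwise disjoint along $I$ by construction. The main idea, and the only nontrivial step, is the compactness/diagonal reduction extracting the ``stable'' set $F$; once $F$ is in hand, the two thinnings are routine recursive constructions. The obstacle I want to highlight is that $F$ may well be infinite (consider $S(n) = \{0, 1, \dots, n\}$), so one cannot simply take $U \equiv F$ constant; the first thinning is precisely what is needed to let $U(n)$ grow finitely with $F \cap S(n)$ while still being increasing.
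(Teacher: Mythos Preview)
Your proof is correct. Both your argument and the paper's rest on the same underlying idea---stabilizing the membership of finitely many points at a time---but they are organized differently. The paper carries out a single interleaved recursion: at stage $n$ it picks $x_n$ and then, using only that $S(x_n)$ is finite, refines the current infinite tail set $J_{n-1}$ to a $J_n$ on which every element of $S(x_n)$ is decided (always in, or always out); $U(x_n)$ and $D(x_n)$ are then read off from this dichotomy, and the monotonicity of $U$ and disjointness of $D$ follow immediately from $J_m\subseteq J_n$ for $m\geq n$. No global limit object is ever named.

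You instead first invoke sequential compactness of $\{0,1\}^\omega$ to extract the limit set $F$ once and for all, and only afterwards perform two separate thinnings (one to make $F\cap S(n)$ increasing, one to make $S(n)\setminus F$ pairwise disjoint). This is a genuinely different packaging: it front-loads the ``hard'' step into a topological black box and reduces the remainder to two routine recursions, at the cost of three passes instead of one. What your approach buys is a clear identification of the asymptotic object $F$ and a clean separation of concerns; what the paper's approach buys is self-containment (no appeal to compactness) and a one-pass construction in which $U$ and $D$ come out with the right properties automatically, with no post-processing. Either is perfectly adequate for the lemma's role in the paper.
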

\begin{proof}Recursively, we define a strictly growing sequence $(x_n: n \in \omega)$ of natural numbers and a decreasing sequence $(J_n: n \in \omega)$ of infinite subsets of $\omega$ such that:
	
	\begin{enumerate}[label=(\arabic*)]
		\item $x_0=0$,
		\item $J_n\cap(x_n+1)=\emptyset$ for every $n \in \omega$,
		\item $x_{n+1}\in J_n$ for every $n \in \omega$, and
		\item $\forall n \in \omega\, \forall t \in C_n[\forall j \in J_n \, (t \in C_j )\vee \forall j \in J_n \, (t \notin C_j)]$.
	\end{enumerate}
	
	This is possible since each $C(n)$ is finite. Then let $I=\{x_n: n \in \omega\}$, $U(x_n)=\{t \in C(x_n): \forall j \in J_n(t \in C(j))\}$ and $D(x_n)=\{t \in C(x_n): \forall j \in J_n(t \notin C(j))\}$ for each $n \in \omega$.
\end{proof}

The criterion is a corollary of the following lemma:

\begin{lem}Let $X$ be a $T_1$ topological space, $D\subseteq X$ be a dense subset and $E=[D]^{<\omega}\setminus \{0\}$. Then:
	
	\begin{enumerate}[label=\roman*)]
		\item If every sequence in $D$ has an accumulation point in $X$, then $X$ is pseudocompact,
		\item if $D$ is open and discrete and $X$ is pseudocompact, then every sequence on $D$ has an accumulation point in $X$,
		\item $E$ is dense on $\CL(X)$,
		\item if $D$ is open and discrete, so is $E$,
		\item if every sequence of pairwise disjoint elements of $E$ has an accumulation point in $\CL(X)$, then the latter is pseudocompact.
	\end{enumerate}
\end{lem}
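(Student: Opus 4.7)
My plan is to do (iii) and (iv) as bookkeeping, then (i) and (ii) as the standard translation between continuous real-valued functions and sequences without accumulation points, and to put the weight of the proof on (v), where Lemma \ref{lemma1} is crucial.

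For (iii), a basic open $\langle U_0,\dots,U_n\rangle$ in $\CL(X)$ has each $U_i$ nonempty, so density of $D$ yields $d_i\in U_i\cap D$ and $\{d_0,\dots,d_n\}\in E$ lies in the basic open set. For (iv), the assumption that $D$ is open and discrete forces each $d\in D$ to be isolated in $X$: a witnessing open $W$ with $W\cap D=\{d\}$ is open in $X$, and since $D$ itself is open, $W\cap D=\{d\}$ is open in $X$. Then for $F\in E$ the basic open $\langle\{d\}:d\in F\rangle$ reduces to $\{F\}$, so $E$ is a union of opens in $\CL(X)$ and is discrete as a subspace.

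For (i), by contraposition, a continuous unbounded $f\colon X\to\mathbb{R}$ combined with density of $D$ yields $(d_n)$ in $D$ with $|f(d_n)|\geq n$; any accumulation point $x$ contradicts continuity, since a bounded open interval around $f(x)$ pulls back to an open neighborhood of $x$ that can only contain finitely many $d_n$. For (ii), again by contraposition, if $(d_n)$ in $D$ has no accumulation point (WLOG distinct) I define $g(d_n)=n$ and $g\equiv 0$ elsewhere. Continuity at each $d_n$ is immediate since it is isolated (by the observation from (iv)); continuity at $x\notin D$ uses a neighborhood $U$ of $x$ meeting $\{d_n:n\in\omega\}$ in a finite set $F$, which is closed by $T_1$, so $U\setminus F$ is an open neighborhood of $x$ on which $g\equiv 0$.

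The main work is (v). Combining (iii) with (i) applied to the $T_1$ space $\CL(X)$ and its dense subset $E$, it suffices to show that every sequence $(F_n)$ in $E$ has an accumulation point in $\CL(X)$. I would apply Lemma \ref{lemma1} to get an infinite $I\subseteq\omega$ and a decomposition $F_n=U(n)\dot\cup V(n)$ for $n\in I$, with $U$ monotone and the $V(n)$ pairwise disjoint, and after thinning $I$ assume either all $V(n)$ are empty or all are nonempty. In the first case the $F_n$ form an increasing chain and $\overline{\bigcup_{n\in I}F_n}$ is the candidate accumulation point; in the second, the hypothesis of (v) supplies an accumulation point $K\in\CL(X)$ of the pairwise disjoint sequence $(V(n))_{n\in I}$, and my candidate is $L=K\cup\overline{\bigcup_{n\in I}U(n)}$. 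The main obstacle is verifying the claim for $L$: given a basic open $\mathcal{N}=\langle W_0,\dots,W_k\rangle$ containing $L$, I partition $\{0,\dots,k\}=J\sqcup J'$ according as $W_i$ meets $K$ or not; for $i\in J'$ the set $W_i$ meets $\overline{\bigcup U(n)}$ hence some $U(n_i)$, so by monotonicity $W_i$ meets every $U(n)$ with $n\geq n_i$, while the basic neighborhood $\langle \bigcup_i W_i, W_j:j\in J\rangle$ of $K$ captures infinitely many $V(n)$ by accumulation. For such $n\in I$ large enough, $F_n=U(n)\cup V(n)$ is contained in $\bigcup_i W_i$ and meets every $W_i$, so $F_n\in\mathcal{N}$, as required.
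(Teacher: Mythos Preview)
Your proof is correct and follows essentially the same route as the paper: parts (i)--(iv) are the standard arguments, and part (v) uses Lemma~\ref{lemma1} followed by the same case split on whether the disjoint pieces $V(n)$ are eventually empty, with $\overline{\bigcup U(n)}$ (resp.\ its union with an accumulation point $K$ of the $V(n)$'s) as the candidate. Your verification that $L=K\cup\overline{\bigcup U(n)}$ is an accumulation point is exactly what the paper leaves to the reader, and your case organization (thinning $I$ once so that all $V(n)$ are empty or all nonempty) is a slight but equivalent repackaging of the paper's Cases 1, 2a, 2b.
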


\begin{proof}i) If there exists an unbounded continuous function $f:X\rightarrow [0, \infty)$, choose $d_n \in D\cap f^{-1}[(n, \infty)]$. Then $(d_n: n \in \omega)$ has no accumulation point.
	
	ii) Suppose $(d_n: n \in \omega)$ is a sequence on $D$ with no accumulation point. Then $A=\{d_n: n \in \omega\}$ is infinite and clopen. So there exists a continuous function $f:X\rightarrow \mathbb R$ such that $f|A$ is unbounded and $f|(X\setminus A)$ is constant.
	
	iii) Since $X$ is $T_1$, $E\subseteq \CL(X)$. Given a nonempty basic open set $V=\langle U_0, \dots, U_n\rangle$, where $U_0, \dots, U_n$ are open in $X$, let $F\in V$. Then $F\subseteq \bigcup_{i\leq n}U_i$ and $F\cap U_i\neq \emptyset$ for each $i$. Let $x_i \in F\cap U_i$. It follows that $\{x_i: i \leq n\} \in V$.
	
	iv) Suppose $D$ is open and discrete. Suppose $F \in E$. Write $F=\{x_0, \dots, x_n\}$. Then $\{F\}=\langle \{x_0\}, \dots, \{x_n\} \rangle$.
	
	v) By $i)$ and $iii)$, it suffices to see that every sequence of elements of $E$ has an accumulation point in $\CL(X)$. So let $S$ be a sequence of elements of $E$. By lemma \ref{lemma1}, there exists $I \in [\omega]^\omega$ and sequences $U, D$ such that for every $n \in I$, $S(n)=U(n)\dot\cup D(n)$ and for every $m, n \in I$, if $m<n$ then $U(m)\subseteq U(n)$ and $D(m)\cap D(n)=\emptyset$. We break the proof into cases.
	
	\textbf{Case 1:} $U(n)=\emptyset$ for every $N \in I$. In this case, $D|I=S|I$ is a sequence of pairwise disjoint subsets of $E$, so by hypothesis it has an accumulation point.
	
	\textbf{Case 2:} $U(n)\neq \emptyset$ for some $N \in I$. Let $I'=I\setminus N$ and $F=\cl\bigcup_{n \in I'}U(n)$. Again, we break into two cases.
	
	\textbf{Case 2a:} There exists $M \in I'$ such that $D(n)=\emptyset$ for every $n\geq M$. In this case, let $I''=I\setminus M$. So $U|I''=S|I''$ is growing. The reader may verify that $S|I''$ converges to $F$.
	
	\textbf{Case 2b:} There exists $I'' \in [I']^{\omega}$ such that $D(n)\neq \emptyset$ for every $n \in I''$. By hypothesis, there exists an accumulation point $F'$ for $D|I''$. The reader may verify that $S|I''$ converges to $F\cup F'$.
\end{proof}

\begin{proof}[Proof of Proposition \ref{criterion}] Let $\mathcal A$ be an almost disjoint family. Notice that $X=\Psi(\mathcal A)$ is $T_1$ and $D=\omega\subseteq X$ is dense, so by letting $E=[D]^{<\omega}\setminus \{0\}$, v) of the previous lemma give us one side of the equivalence. For the converse, first notice that $\CL(X)$ is Hausdorff since $X$ is $T_1$ (\cite{michael1951}). By iii) and iv) of the previous lemma, $E$ is open and discrete, so by applying $ii)$ by swapping $D$ by $E$ and $X$ by $\CL(X)$, the converse follows.
\end{proof}

\section{Accumulation points for sequences}

In this section, we provide some criteria that implies that a sequence of pairwise disjoint finite nonempty subsets of $\omega$ has an accumulation point in $\CL(\Psi(\mathcal A))$. We start with a simple one:

\begin{lem}\label{lemma3}
	Let $\mathcal A$ be an almost disjoint family and let $X=\CL(\Psi(\mathcal A))$. Suppose $C=(C_n: n \in \omega)$ is a sequence of pairwise disjoint finite nonempty subsets of $\omega$. If there exists $F \in [\mathcal A]^{<\omega}$ such that $\{n \in \omega: C_n\subseteq \bigcup F\}$ is infinite, then $C$ has an accumulation point in $X$.
\end{lem}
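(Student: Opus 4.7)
The plan is to exhibit an explicit accumulation point via pigeonhole on $F$. Let $I_0 := \{n \in \omega : C_n \subseteq \bigcup F\}$, which is infinite by hypothesis, and to each $n \in I_0$ attach the ``trace'' $\sigma(n) := \{A \in F : C_n \cap A \neq \emptyset\} \subseteq F$. Since $F$ has only finitely many subsets, some $F' \subseteq F$ satisfies that $I := \{n \in I_0 : \sigma(n) = F'\}$ is infinite; moreover $F' \neq \emptyset$, since $\emptyset \neq C_n \subseteq \bigcup F$ forces $C_n$ to meet some member of $F$.

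I would then argue that $F'$, regarded as a finite (hence closed, as $\Psi(\mathcal{A})$ is $T_1$) subset of $\Psi(\mathcal{A})$, is the desired accumulation point of $(C_n)$ in $\CL(\Psi(\mathcal{A}))$. Writing $F' = \{A_1, \dots, A_l\}$, the sets
\[
W_N := \langle \{A_1\} \cup (A_1 \setminus N), \dots, \{A_l\} \cup (A_l \setminus N)\rangle, \qquad N \in \omega,
\]
form a neighborhood base of $F'$ in the Vietoris topology; this is a standard refinement argument from the fact that $\{\{A\} \cup (A \setminus N) : N \in \omega\}$ is a local base at each $A \in F'$ in $\Psi(\mathcal{A})$.

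To conclude, fix $N \in \omega$. Since the $C_n$ are pairwise disjoint, at most $N$ indices satisfy $C_n \cap N \neq \emptyset$, so cofinitely many $n \in I$ have $C_n \cap N = \emptyset$. For each such $n$ one has $C_n \subseteq (\bigcup F') \setminus N = \bigcup_{i \leq l}(A_i \setminus N)$ (containment condition), and for every $i \leq l$, $\emptyset \neq C_n \cap A_i \subseteq A_i \setminus N$ (intersection conditions), so $C_n \in W_N$. Thus $W_N$ contains $C_n$ for infinitely many $n$, and $F'$ is an accumulation point of the sequence. I do not anticipate a real obstacle; the whole argument is a pigeonhole followed by an unpacking of the Vietoris base at a finite set, with the only mildly technical point being the standard verification that the $W_N$ form a local base at $F'$.
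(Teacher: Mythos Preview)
Your proof is correct and follows essentially the same approach as the paper's: both isolate an infinite set of indices on which the ``trace'' $\{A\in F: C_n\cap A\neq\emptyset\}$ is constant, and then show that this constant trace (your $F'$, the paper's $K$) is a limit of the corresponding subsequence. The only cosmetic differences are that the paper obtains the constant-trace set by a step-by-step refinement through the elements of $F$ rather than a single pigeonhole on $\mathcal P(F)$, and verifies convergence against an arbitrary basic Vietoris neighborhood rather than the explicit base $\{W_N:N\in\omega\}$ you use; neither difference is substantive.
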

\begin{proof}
	Set $I=\{n \in \omega: C_n\subseteq \bigcup F\}$ and enumerate $F$ as $\{A_0, \dots, A_k\}$.
	
	We show that there exists $J \in [I]^\omega$ such that for every $A\in F$, $\{n \in J: A\cap C_n\neq \emptyset\}$ is either $J$ or $\emptyset$. Recursively, we define a decreasing sequence $I_n \in [I]^\omega$ for $n\leq k+1$ as follows: Let $I_0=I$. After defining $I_n$ for $n<k+1$, let $I_{n+1}=\{m \in I_n: A_n\cap C_m\neq \emptyset  \}$ if this set is infinite. Otherwise, let $I_{n+1}=\{m \in I_n: A_n\cap C_m= \emptyset \}$. Finally, let $J=I_{k+1}$.

	Let $K=\{A \in F: \{n \in J: A\cap C_n\neq \emptyset\}=J\}$. $K$ is not empty, for if it was, then given $n \in J$, $C_n\cap \bigcup F=\emptyset$, but $n \in I$, so $C_n=\emptyset$, a contradiction. Also, notice that if $n \in J$ and $A \in F\setminus K$, then $C_n\cap A=\emptyset$. So $C_n\subseteq \bigcup K$.
	
	We claim that $(C_n: n \in J)$ converges to $K$: given open subsets $U_0, \dots, U_l$ of $\Psi(\mathcal A)$ such that $K\in \langle U_0,\dots,U_l\rangle$, there exists $M\in \omega$ such that for every $A \in K$, $A\setminus M\subseteq \bigcup_{i\leq l}U_i$ (because $K\subseteq \bigcup_{i\leq l}U_i$ and $K$ is finite). Also, for each $i\leq l$, there exists $N_i \in \omega$ and $A_i \in K$ such that $A_i\setminus N_i\subseteq U_i$. Finally, since the $C_n \,'$s are pairwise disjoint, there exists $m_0$ such that if $n\geq m_0$, then $C_n\cap \max\{N_0, \dots, N_l, M\}=\emptyset$. So if $n \in J\setminus m_0$, it follows that $\emptyset\neq C_n\cap A_i\setminus N_i\subseteq C_n\cap U_i$ for each $i\leq k$ and that $C_n\subseteq (\bigcup K)\setminus M\subseteq \bigcup_{i\leq l}U_i$.\\
\end{proof}

Later, we will construct almost disjoint families using forcing that satisfy the following criterion.

\begin{lem}\label{lemma4}
	Let $\mathcal A=\{A_\alpha: \alpha<\omega_1\}$ be a mad family. Suppose that there exists $\gamma<\omega_1$ and $I \in [\omega]^\omega$ such that:
	
	\begin{enumerate}[label=(\roman*)]
		\item For every $\xi<\gamma$, $\{n \in I: C_n\cap A_\xi\neq \emptyset\}$ is either finite or cofinite over $I$, and
		\item $\{\{n \in I: A_\xi\cap C_n\neq \emptyset\}: \gamma\leq \xi<\omega_1\}$ has the SFIP.
	\end{enumerate}
	
	Then, by letting $\mathcal A_0=\{A_\xi: \xi<\gamma\, \text{ and } \{n \in I:C_n\cap A_\xi\neq \emptyset\} \text{ is cofinite in } I\}$ and $\mathcal A_1=\mathcal A_0\cup \{A_\xi: \gamma\leq \xi< \omega_1\}$, $\mathcal A_1$ is an accumulation point of $(C_n: n \in I)$ in $\CL(\Psi(\mathcal A))$.
	
\end{lem}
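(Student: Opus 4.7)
The plan is to verify directly that $\mathcal A_1$ is a nonempty closed subset of $\Psi(\mathcal A)$ and that every Vietoris neighborhood of $\mathcal A_1$ contains $C_n$ for infinitely many $n \in I$. Closedness is automatic because $\mathcal A$ is a closed discrete subspace of $\Psi(\mathcal A)$ (the basic neighborhood $\{A\}\cup A$ of $A\in\mathcal A$ meets $\mathcal A$ only at $A$), so every subset of $\mathcal A$ is closed, and $\mathcal A_1$ is nonempty since it contains every $A_\xi$ with $\gamma\leq \xi<\omega_1$. I would then fix an arbitrary basic Vietoris neighborhood $V=\langle U_0,\dots,U_l\rangle$ of $\mathcal A_1$, choose witnesses $A_{\eta_i}\in\mathcal A_1\cap U_i$ and $m_i\in\omega$ with $\{A_{\eta_i}\}\cup(A_{\eta_i}\setminus m_i)\subseteq U_i$, and write $W=\bigcup_{i\leq l}U_i$. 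The task splits into (a) showing $C_n\subseteq W$ for cofinitely many $n\in I$, and (b) finding infinitely many $n\in I$ for which $C_n\cap U_i\neq\emptyset$ for every $i\leq l$.

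For (b), I would partition the indices by whether $\eta_i<\gamma$ or $\eta_i\geq\gamma$. When $\eta_i<\gamma$, membership $A_{\eta_i}\in\mathcal A_1$ forces $A_{\eta_i}\in\mathcal A_0$, so hypothesis (i) makes $\{n\in I:C_n\cap A_{\eta_i}\neq\emptyset\}$ cofinite in $I$. When $\eta_i\geq\gamma$, hypothesis (ii) supplies these sets from an SFIP family, so the intersection over the (finitely many) such indices is infinite. Intersecting both kinds yields an infinite set of $n\in I$ for which $C_n\cap A_{\eta_i}\neq\emptyset$ for all $i$; pairwise disjointness of $(C_n)$ gives $\min C_n>\max_i m_i$ cofinitely often, promoting each witness to $C_n\cap(A_{\eta_i}\setminus m_i)\neq\emptyset\subseteq U_i$.

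The principal obstacle is (a), where the topological statement ``$\mathcal A_1\subseteq W$'' must be converted into an assertion about $\omega$. I would set $Z=\omega\setminus W$ and note that for each $A\in\mathcal A_1$, openness of $W$ yields $A\setminus k\subseteq W$ for some $k$, so $A\cap Z$ is finite; thus $Z$ is almost disjoint from every element of $\mathcal A_1$. If $Z'=Z\cap\bigcup_{n\in I}C_n$ were infinite, maximality of $\mathcal A$ would produce $A_\beta\in\mathcal A$ with $|A_\beta\cap Z'|=\omega$; since $Z'$ is almost disjoint from $\mathcal A_1$, this $A_\beta$ must lie in $\mathcal A\setminus\mathcal A_1$, and by hypothesis (i) together with the definition of $\mathcal A_0$, such $A_\beta$ satisfies $\{n\in I:A_\beta\cap C_n\neq\emptyset\}$ finite. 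But $A_\beta\cap Z'\subseteq\bigcup_{n\in I}(A_\beta\cap C_n)$ combined with pairwise disjointness of the finite sets $C_n$ forces $A_\beta$ to meet infinitely many $C_n$ with $n\in I$, a contradiction. Hence $Z'$ is finite, and pairwise disjointness of $(C_n)$ yields $C_n\subseteq W$ for all but finitely many $n\in I$. Intersecting this cofinite set with the infinite set produced in (b) gives infinitely many $n\in I$ with $C_n\in V$, as required.
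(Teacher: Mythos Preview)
Your proof is correct and follows essentially the same route as the paper's: fix a basic Vietoris neighborhood $\langle U_0,\dots,U_l\rangle$ of $\mathcal A_1$, pick witnesses $A_{\eta_i}\in\mathcal A_1\cap U_i$, use condition (i) for $\eta_i<\gamma$ and the SFIP from (ii) for $\eta_i\geq\gamma$ to get infinitely many $n\in I$ with $C_n$ meeting each $U_i$, and handle the covering condition by contradiction via maximality of $\mathcal A$. Your presentation is in fact slightly cleaner in two respects: you explicitly verify that $\mathcal A_1\in\CL(\Psi(\mathcal A))$ (the paper omits this), and you separate the covering argument (your part (a)) from the hitting argument (your part (b)) before intersecting, whereas the paper first restricts to the hitting set $\tilde I$ and then argues covering there---but the substance is identical.
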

\begin{proof}
	Let $J=\{\xi<\gamma: |I\setminus \{n \in I: C_n\cap A_\xi\neq \emptyset\}|<\omega\}$. Then $\mathcal A_0=\{A_\xi: \xi \in J\}$ and $\mathcal A_1=\{A_\xi: \xi \in J\cup [\gamma, \omega_1)\}$.
	Suppose $\langle U_0, \dots U_k\rangle$ is a neighborhood of $\mathcal A_1$, where $U_0$, \dots, $U_k$ are open subsets of $\Psi(\mathcal A)$. For each $i\leq k$, there exists $N_i \in \omega$ and $\xi_i \in J\cup [\gamma, \omega_1)$ such that $A_{\xi_i}\setminus N_i\subseteq U_i$. Let $K=\{\xi_i: i\leq k\}\cap [\gamma, \omega_1)$. By (2), $\{n \in I: \forall \xi \in K\, A_\xi\cap C_n\neq \emptyset\}$ is infinite.
	
	Since if $i\leq k$ and $\xi_i< \gamma$ then $\xi_i\in J$, it follows that $\{n \in I: A_{\xi_i}\cap C_n\neq \emptyset\}$ is cofinite on $I$, so $\{n \in I: \forall i\leq k\, A_{\xi_i}\cap C_n\neq \emptyset\}=\bigcap_{i\leq k}\{n \in I: A_{\xi_i}\cap C_n\neq \emptyset\}$ is infinite.
	
	Let $\tilde I=\bigcap_{i\leq k}\{n \in I: A_\xi\cap C_n\neq \emptyset\}\setminus \max\{N_i: i\leq k\}$. Notice that if $l\geq \max\{N_i: i\leq k\}$ and $l \in \tilde I$, then $\forall i\leq k$, $C_l\cap U_i\neq \emptyset$, so all that is left to see is that $\{n \in \tilde I: C_n\setminus \bigcup_{i\leq k} U_i\neq \emptyset\}$ is finite.
	
	Suppose by contradiction that it is infinite. Since the $C_n \,'$s are pairwise disjoint, $\bigcup_{n \in \tilde I} C_n\setminus \bigcup_{i\leq k}U_i$ is infinite, therefore there exists $\alpha$ such that $A_\alpha \cap \left(\bigcup_{n \in \tilde I} C_n\setminus \bigcup_{i\leq k}U_i\right)$ is infinite. If $\alpha\in\gamma\setminus J$, then, by (i), $\{n \in I: A_\alpha \cap C_n\neq \emptyset\}$ is finite, which implies $A_\alpha\cap \left(\bigcup_{n \in \tilde I} C_n\setminus \bigcup_{i\leq k}U_i\right)$ is finite, a contradiction. Else, $A_\alpha\in  \bigcup_{i\leq k}U_i$. Since the latter is open, it follows that $A_\alpha\subseteq^* \bigcup_{i\leq k}U_i$ so, again, $A_\alpha\cap \left(\bigcup_{n \in \tilde I} C_n\setminus \bigcup_{i\leq k}U_i\right)$ is finite, a contradiction.
\end{proof}

In order to apply the previous lemma, we need a special set $I \in [\omega]^\omega$. It will be useful to have a standard candidate for an $I$ that only depends on a given a countable almost disjoint family, an enumeration of it and a sequence of pairwise disjoint nonempty finite sets of naturals. First, we define a pseudointersection operator.

\begin{definition}
	Let $A=(a_n: n \in \omega)$ be a countable family of elements of $[\omega]^\omega$ with the SFIP. Let $\Pseudo(A)=\{\min(\bigcap_{k\leq n}a_k\setminus n): k\leq n\}$.
\end{definition}
Notice that $\Pseudo(A)$ is really a pseudointersection of $\{a_n: n \in \omega\}$ and that $\Pseudo(A)$ is absolute for transitive models of ZFC. Now we present the default candidate for an $I$.

\begin{definition}Given a infinite countable ordinal $\gamma$, a bijection $f:\omega\rightarrow \gamma$, a family $A=(A_\alpha: \alpha <\gamma)$ of distinct elements whose image is an almost disjoint family, $C=(C_n: n \in \omega)$ a sequence of nonempty finite pairwise disjoint subsets of $\omega$, one recursively defines:
	
	\begin{itemize}
		\item $I_0(A, C, f)=\omega$,
		\item $I_{m+1}(A, C, f)=\{n \in I_{m}: A_{f(m)}\cap C_n\neq \emptyset\}$, if  $\{n \in I_{m}: A_{f(m)}\cap C_n\neq \emptyset\}$ is infinite,
		\item $I_{m+1}(A, C, f)=I_m\setminus \{n \in I_{m}: A_{f(m)}\cap C_n\neq \emptyset\}$ otherwise,
		\item $I(A, C, f)=\Pseudo(I_{m}(A, C, F): m \in \omega)$.
	\end{itemize}

\end{definition}

The good thing about $I(A, C, f)$ is that it is absolute for transitive models of ZFC and that it always satisfies the first hypothesis of lemma \ref{lemma4}. We leave the proof to the reader.

\begin{lem} The operator $I$ defined above is absolute for transitive models of ZFC. Also, if $\gamma$ is an infinite countable ordinal, $f: \omega\rightarrow \gamma$ is a bijection, $A=(A_\alpha: \alpha <\gamma)$ is a family of distinct elements whose image is an almost disjoint family and $C=(C_n: n \in \omega)$ is a sequence of nonempty finite pairwise disjoint subsets of $\omega$, then $I(A, C, f) \in [\omega]^{\omega}$ and for every $\xi<\gamma$, either $\{n \in I: C_n\cap A_i\neq \emptyset\}$ or $\{n \in I: C_n\cap A_i= \emptyset\}$ is finite.
\end{lem}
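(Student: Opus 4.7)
The plan is to verify three things separately: absoluteness of the operator, that $I(A,C,f)$ is indeed infinite, and that it satisfies the dichotomy property. For absoluteness, I would observe that $\Pseudo$ is defined by a primitive recursive recipe on $\omega$ (taking finite intersections, removing an initial segment, and picking the minimum), all of which are absolute for transitive models of ZFC; the values are explicitly natural numbers. Similarly, the recursive definition of $I_m(A,C,f)$ only involves $\Delta_0$ operations on subsets of $\omega$ together with the predicate ``is infinite,'' which is absolute for subsets of $\omega$ between transitive models. So the whole construction is absolute.

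For infinitude of $I(A,C,f)$, I would prove by induction that each $I_m(A,C,f)$ is infinite. The base $I_0=\omega$ is clear. At the successor step, we either take the ``positive'' slice which is picked precisely when it is infinite, or we take its complement in $I_m$, which in that case removes only finitely many elements from an infinite set and is therefore still infinite. Consequently the decreasing chain $(I_m(A,C,f))_{m\in\omega}$ consists of infinite sets; any finite subfamily has infinite intersection (namely the smallest element), so the chain has the SFIP. Then $\Pseudo$ of this chain is a pseudointersection; the defining set $\{\min(\bigcap_{k\leq n} I_k\setminus n): n\in\omega\}$ consists of values each at least $n$, so it is unbounded in $\omega$ and hence infinite.

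For the dichotomy, fix $\xi<\gamma$ and let $m\in\omega$ be the unique index with $f(m)=\xi$; such $m$ exists because $f$ is a bijection. By the construction of $I_{m+1}(A,C,f)$, one of the following two inclusions holds outright: either $I_{m+1}\subseteq\{n\in\omega: A_\xi\cap C_n\neq\emptyset\}$ (if the positive slice was chosen) or $I_{m+1}\subseteq\{n\in\omega: A_\xi\cap C_n=\emptyset\}$ (otherwise). Since $I(A,C,f)$ is a pseudointersection of the chain, $I\setminus I_{m+1}$ is finite, and so whichever of the two complementary sets $\{n\in I: A_\xi\cap C_n\neq\emptyset\}$ and $\{n\in I: A_\xi\cap C_n=\emptyset\}$ is disjoint from $I_{m+1}$ is contained in the finite set $I\setminus I_{m+1}$, giving the claimed alternative.

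There is no serious obstacle here; the only point requiring a bit of care is the absoluteness of the ``is infinite'' test used in choosing which branch to take at each step, but as remarked this predicate is absolute between transitive models when applied to a definable subset of $\omega$, so both models compute the same $I_m$ at every stage and thus the same $\Pseudo$.
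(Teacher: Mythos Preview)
Your proof is correct and follows exactly the natural route the paper intends; the paper in fact omits the proof entirely (``We leave the proof to the reader''), so there is nothing to compare against beyond noting that your verification of absoluteness, infinitude via the decreasing-chain/SFIP argument, and the dichotomy via $I\subseteq^* I_{m+1}$ is precisely the expected one.
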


\section{The First Example}

Let $M$ be a c.t.m. of $¬$CH. On $M$, there exists a c.c.c. iterated forcing notion of $\omega_1$ steps and finite supports $(\mathbb P_\alpha: \omega\leq\alpha\leq \omega_1^M) \in M$ such that if $G$ is $\mathbb P_{\omega_1}$-generic over $M$, then there exists a sequence $(A_\beta: \beta<\omega_1^M)$ (not in $M$) such that, by letting $\mathcal A=\{A_\alpha: \alpha<\omega_1^M\}$. 

\begin{enumerate}[label=(\arabic*)]
	\item $\mathcal A$ is an almost disjoint family,
	\item for every $\alpha \in [\omega, \omega_1^M]$, $(A_\beta: \beta<\alpha) \in M[G_\alpha]$ (so $\{A_n: n \in \omega\} \in M$ and $\mathcal A \in M[G])$,
	\item for every $\alpha \in [\omega, \omega_1^M]$ and for every $X \in [\omega]^\omega\cap M[G_\alpha]$, there exists $\beta\leq \alpha$ such that $|A_\beta\cap X|=\omega$.
\end{enumerate}

Notice that this implies that $\mathcal A$ is a mad family on $M[G]$, therefore $M[G]\vDash |\mathcal A|=\omega_1=\mathfrak h=\mathfrak a<\mathfrak c$. So, by Theorem \ref{hrusak}, working in $M[G]$ there exists a MAD family whose hyperspace of its Isbell-Mrówka space is not pseudocompact.

Such forcing notion may be found in \cite{halbesein} (pg 428, 105.), however, we quickly sketch how to construct it: first, notice that whenever $\mathcal A$ is a (infinite) countable almost disjoint family, one may consider $\mathbb Q_{\mathcal A}=[\omega]^{<\omega}\times [\mathcal A]^{<\omega}$ ordered by $(s, F)\leq (s', F')$ iff $s'\subseteq s, F'\subseteq F$ and $(s\setminus s')\cap \bigcup F'=\emptyset$. It is not difficult to verify that $\mathbb Q_\mathcal A$ has the c.c.c. and that if $G$ is $\mathbb Q_\mathcal A$-generic over $M$, then if $X=\bigcup\{s: (s, F)\in G\}$, $\mathcal A\cup\{X\}$ is an almost disjoint family and for all $A\in [\omega]^\omega\cap M$, there exists $B\in \mathcal A'$ such that $B\cap A$ is infinite. So by iterating it $\omega_1$ times with finite supports using names for these p.o.'s, where each step adds a new member to the almost disjoint family, we get the desired notion.

\begin{theorem}Let $M$ be a c.t.m. of $¬$CH. Then there exists a c.c.c. iterated forcing notion of $\omega_1$ steps and finite supports, $\mathbb P_{\omega_1}$, such that if $G$ is $\mathbb P_{\omega_1}$-generic over $M$, there exists an almost disjoint family $\mathcal A$ such that:
	
	$$M[G]\vDash |\mathcal A|=\omega_1, \,\mathcal A\text{ is MAD and } \CL(\Psi(\mathcal A)) \text{ is pseudocompact.}$$
	
	\begin{proof}Let $\mathbb P_{\omega_1}$ and $\mathcal A$ be as described above. Let $C=(C_n: n \in \omega) \in M[G]$ be a sequence on $[\omega]^\omega\setminus \{\emptyset\}$ of pairwise disjoint nonempty sets.\\
		
		\textbf{Case 1:} There exists $F \in [\omega_1^M]^{<\omega}\setminus \{\emptyset\}$ such that $I=\{n \in \omega: C_n\subseteq \bigcup_{\alpha \in F}A_\alpha\}$ is infinite. Working in $M[G]$, it follows by lemma \ref{lemma3}, that $(C_n: n \in \omega)$ has a convergent subsequence.\\
		
		\textbf{Case 2:} For every $F \in [\omega_1^M]^{<\omega}\setminus \{\emptyset\}$, the set $\{n \in \omega: C_n\setminus \bigcup_{\alpha \in F}A_\alpha\neq \emptyset\}$ is cofinite. In this case, since $\mathbb P_{\omega_1^M}$ satisfies the c.c.c., there exists an infinite $\mu<\omega_1^M$ such that $(C_n: n \in \omega)\in M[G_\mu]$.\\
		
		Let $f:\omega\rightarrow \mu$ be any bijection in $M$ and let $I=I((A_\beta: \beta<\mu), C, f)$.\\
		
		\textbf{Claim:} For every $K \in [[\mu, \omega_1^M[]^{<\omega}$, $\bigcap_{\xi \in K}\{n \in I: A_\xi\cap C_n\neq \emptyset\}$ is infinite.
		
		Proof of the claim: Write $K=\{\mu_1, \dots, \mu_k\}$, where $\mu_1<\dots<\mu_k$. Working in $M[G_{\mu_1}]$, write, $\mu_1=\bigcup_{m \in \omega}F_m$, where for each $m$, $F_m\subseteq \mu_1$ is finite and $F_m\subseteq F_{m+1}$. Since for every $m$ the set $\{n \in \omega: C_n\setminus \bigcup_{\alpha\in F_m}A_\alpha\neq \emptyset\}$ is cofinite, we may recursively choose a strictly growing sequence $n_m \in I$ and a sequence $k_m$ such that $k_m \in C_{n_m}\setminus \bigcup_{\xi \in F_m}A_\xi$. Since the $C_n$'s are pairwise disjoint, $X=\{k_m: m \in \omega\}$ is infinite and $\{X\}\cup\{A_\xi: \xi<\mu_1\}$ is an almost disjoint family, which implies that $X\cap A_{\mu_1}$ is infinite. Let $I_1=\{n \in I: A_{\mu_1}\cap C_n\neq \emptyset\} \in M[G_{\mu_1+1}]$. Since each $k_m$ belong to a different $C_n$, the set $I_1$ is infinite. Now we recursively repeat the argument for $n+1\leq k$ to get $I_{n+1}$ by using $I_n$ in the place of $I$, $\mu_{n+1}$ in the place of $\mu_1$. Notice that $I_{k}\subseteq \bigcap_{\xi \in K}\{n \in I: A_\xi\cap C_n\neq \emptyset\}$, which proves the claim.\\
		
Working on $M[G]$, it follows from Lemma \ref{lemma4}, that the sequence $(C_n: n \in \omega)$ has an accumulation point.
	\end{proof}
\end{theorem}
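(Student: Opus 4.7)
The plan is to apply the criterion of Proposition~\ref{criterion}: given any sequence $C=(C_n:n\in\omega)\in M[G]$ of pairwise disjoint nonempty finite subsets of $\omega$, I must produce an accumulation point in $\CL(\Psi(\mathcal{A}))$. The strategy is to split on whether $\mathcal{A}$ already ``captures'' $C$ through finitely many of its members, handling the easy case by Lemma~\ref{lemma3} and the hard case by Lemma~\ref{lemma4}, using the genericity properties (1)--(3) of the iteration to supply the SFIP hypothesis. The cardinality and MAD claims are immediate from the construction, so the whole work is in the pseudocompactness clause.

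First, I would dichotomize. If there is a finite $F\subseteq\omega_1^M$ with $\{n:C_n\subseteq\bigcup_{\alpha\in F}A_\alpha\}$ infinite, then Lemma~\ref{lemma3} delivers the accumulation point at once. Otherwise, for every finite $F\subseteq\omega_1^M$ the set $\{n:C_n\setminus\bigcup_{\alpha\in F}A_\alpha\neq\emptyset\}$ is cofinite in $\omega$; this is the substantive case. Here I would invoke the c.c.c.\ to find a countable $\mu<\omega_1^M$ with $C\in M[G_\mu]$, fix a bijection $f:\omega\to\mu$ in $M$, and form the canonical index set $I=I((A_\beta:\beta<\mu),C,f)$. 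By absoluteness, $I$ lives in every relevant intermediate model, and by the properties of the $I$-operator hypothesis~(i) of Lemma~\ref{lemma4} (with $\gamma=\mu$) comes for free.

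The main obstacle is verifying hypothesis~(ii) of Lemma~\ref{lemma4}, that is, that the family $\{\{n\in I:A_\xi\cap C_n\neq\emptyset\}:\mu\leq\xi<\omega_1^M\}$ enjoys the SFIP. I would prove this by a one-at-a-time argument on a finite $K=\{\mu_1<\cdots<\mu_k\}\subseteq[\mu,\omega_1^M)$. Working in $M[G_{\mu_1}]$, write $\mu_1=\bigcup_m F_m$ as an increasing union of finite sets; using the hard-case assumption, recursively pick a strictly increasing $n_m\in I$ and witnesses $k_m\in C_{n_m}\setminus\bigcup_{\xi\in F_m}A_\xi$. Pairwise disjointness of the $C_n$'s makes $X=\{k_m:m\in\omega\}$ infinite and almost disjoint from every $A_\xi$ with $\xi<\mu_1$, so property~(3) of the iteration forces $X\cap A_{\mu_1}$ to be infinite, hence $\{n\in I:A_{\mu_1}\cap C_n\neq\emptyset\}$ is infinite. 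Iterating this argument inside the resulting infinite subset of $I$, with $\mu_{j+1}$ playing the role of $\mu_1$ and work now carried out in $M[G_{\mu_{j+1}}]$, yields an infinite subset of $I$ meeting $A_{\mu_j}$ for all $j\le k$, which is the desired SFIP. Lemma~\ref{lemma4} then produces the accumulation point.

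The genuinely delicate step is the bookkeeping in this last paragraph: at each stage I must ensure that the witness $X$ is constructed in exactly the intermediate model $M[G_{\mu_j}]$ so that property~(3) of the iteration applies with the correct $\alpha=\mu_j$. Everything else is routine manipulation of the cases and of the lemmas already at hand.
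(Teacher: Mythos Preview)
Your proposal is correct and follows the paper's proof essentially verbatim: the same Case~1/Case~2 dichotomy handled by Lemma~\ref{lemma3} and Lemma~\ref{lemma4} respectively, the same use of c.c.c.\ to locate $C$ in an intermediate model $M[G_\mu]$, the same canonical $I=I((A_\beta:\beta<\mu),C,f)$, and the same SFIP argument by building in $M[G_{\mu_j}]$ a set $X$ almost disjoint from $\{A_\xi:\xi<\mu_j\}$ and invoking property~(3) to force $|X\cap A_{\mu_j}|=\omega$. You have also correctly identified the one delicate bookkeeping point, namely that $X$ must land in the right intermediate model for property~(3) to bite.
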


\begin{cor}\label{result}Con(ZFC)$\rightarrow$ Con(ZFC+ there exists a mad family $\mathcal A_0$ of cardinality $\omega_1$ and a MAD family $\mathcal A_1$ of cardinality $\mathfrak c>\omega_1$ such that $\CL(\Psi(\mathcal A_0))$ is pseudocompact but $\CL(\Psi(\mathcal A_1))$ is not.)
\end{cor}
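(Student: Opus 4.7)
The plan is to realize both families inside a single forcing extension, using the theorem just proved together with Theorem \ref{hrusak2}. Start from Con(ZFC) and produce, by a standard c.c.c.\ forcing (e.g.\ adding $\omega_2$ Cohen reals over a c.t.m.\ of CH), a countable transitive model $M$ of ZFC+$\neg$CH. Working inside $M$, apply the preceding theorem to obtain the c.c.c.\ finite-support iteration $\mathbb P_{\omega_1}\in M$ and a generic $G$ such that $M[G]$ contains a MAD family $\mathcal A_0$ of cardinality $\omega_1$ with $\CL(\Psi(\mathcal A_0))$ pseudocompact. This supplies the first family; note that $|\mathcal A_0|=\omega_1<\mathfrak c$ will be verified in the next step.

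Next, I would check that $M[G]\vDash \mathfrak h<\mathfrak c$. Because $\mathbb P_{\omega_1}$ is c.c.c., it preserves all cardinals, so $\omega_1^{M[G]}=\omega_1^M$ and $\omega_2^{M[G]}=\omega_2^M$; since $M\vDash \neg$CH we have $(2^\omega)^{M[G]}\geq (2^\omega)^M\geq \omega_2^M=\omega_2^{M[G]}$, whence $\mathfrak c^{M[G]}>\omega_1$. On the other hand, the existence of the MAD family $\mathcal A_0$ of size $\omega_1$ gives $\mathfrak a^{M[G]}=\omega_1$, and together with $\omega_1\leq \mathfrak p\leq \mathfrak h\leq \mathfrak a$ this forces $\mathfrak h^{M[G]}=\omega_1<\mathfrak c^{M[G]}$, as desired.

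Now, still working inside $M[G]$, invoke Theorem \ref{hrusak2} to obtain a MAD family $\mathcal A_1$ whose $\CL(\Psi(\mathcal A_1))$ is not pseudocompact. As the paper already observes, Hru\v sák et al.'s construction yields a MAD family on $2^{<\omega}$ consisting entirely of chains and antichains, and every such MAD family has cardinality $\mathfrak c$; hence $|\mathcal A_1|=\mathfrak c^{M[G]}>\omega_1=|\mathcal A_0|$. Consequently $M[G]$ witnesses the statement in the corollary, and by soundness Con(ZFC) implies the stated consistency.

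There is no real mathematical obstacle here: everything is already contained in the preceding theorem, in Theorem \ref{hrusak2}, and in the standard facts about c.c.c.\ iterations. The only care required is the bookkeeping in the middle paragraph—verifying that $\mathfrak c$ has not been collapsed down to $\omega_1$ by the iteration, so that the two families have the required distinct cardinalities—and the (already noted) remark that the family produced by Theorem \ref{hrusak2} must have size $\mathfrak c$. Once these are in place, the corollary is immediate.
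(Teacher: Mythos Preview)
Your proposal is correct and follows essentially the same route the paper intends: the paper states the corollary without proof, relying on the remark preceding the theorem that in $M[G]$ one has $\mathfrak h=\mathfrak a=\omega_1<\mathfrak c$, together with Theorem~\ref{hrusak2} and the observation that the Hru\v s\'ak et~al.\ example necessarily has size $\mathfrak c$. Your only addition is the explicit bookkeeping that $\mathfrak c$ is not collapsed, which the paper leaves implicit.
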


\section{An example in the Cohen Model}
Before we start, we emphasize some differences between the previous example and the example in the Cohen Model. In the previous example, the MAD family is not in the ground model. This time, it will be. In the previous example, the ground model satisfies $¬$CH. This time, it will satisfy $CH$. So the two examples, despite using similar techniques, have some important differences.

For every $I, J$, let $\text{Fn}(I, J)=\bigcup_{a \in [I]^{<\omega}} J^{a}$. The Cohen forcing notion, defined as $\text{Fn}(\omega, 2)$ ordered by reverse inclusion, as usual. If $\kappa$ is an infinite cardinal, the forcing notion that adds $\kappa$ Cohen reals is $\mathbb C_\kappa=\text{Fn}(\kappa, 2)$. We refer \cite{kunen2011set} and \cite{kunen1980set} or \cite{jech1978set} for the basic properties of Cohen Forcing.

\begin{definition}
	Let $M$ be a c.t.m. and $\mathcal A \in M$ be an almost disjoint family. We say that $\mathcal A$ is indestructible for Cohen extensions if for every infinite cardinal $\kappa$ of $M$ and every $\mathbb C_\kappa$ generic filter $G$ over $M$, $(\mathcal A \text{ is MAD})^{M[G]}$. Since this is the only forcing we will mention from now on, we 
    will call it indestructible MAD family.
\end{definition}

We will modify the well~known construction of a indestructible MAD family in order to construct an example of a MAD family whose hyperspace of its Isbell-Mrówka space is pseudocompact in every Cohen extension. For the construction of a indestructible MAD family, we refer to \cite{kunen1980set}.
\begin{theorem}Let $M$ be a c.t.m. for ZFC+CH. Suppose:
	
	\begin{enumerate}[label=\alph*)]
		\item $((\dot C_\gamma, p_\gamma): \omega\leq \gamma<\omega_1^M) \in M$ be a listing of all pairs $(\dot C, p)$ such that:
		
		\begin{itemize}
			\item $\dot C$ is a $\mathbb C$-nice name for a subset of $\check{(\omega\times [\omega]^{<\omega})}$ in $M$,
			\item $p \in \mathbb C$,
			\item $p \Vdash \dot C:\omega\rightarrow \check{([\omega]^{<\omega}\setminus \{\emptyset\})} \text{ is a sequence of pairwise disjoint sets }.$
			
		\end{itemize}
		\item $(f_\gamma: \omega\leq\gamma<\omega_1) \in M$ is such that each $f_\gamma:\omega\rightarrow \gamma$ is bijective,
	\end{enumerate}
	
	Then there exists an indestructible almost disjoint family $\mathcal A=\{A_\alpha: \alpha<\omega_1^M\}$ in $M$ (enumerated in $M$) such that for every $\beta<\omega_1^M$, for every infinite $\gamma \leq\beta$, for every $F \in [\beta]^{<\omega}$,\\
    
if $\forall J \in [\beta]^{<\omega}(p_\gamma \Vdash |\{n \in I(\check{\mathcal A|\gamma}, \dot C_\gamma, \check f_\gamma): \forall \xi \in \check F\, (\dot C_\gamma(n)\cap A_\xi\neq \emptyset)$ and  $\dot C_\gamma(n)\setminus \bigcup_{\xi \in J}{A_\xi}\neq \emptyset\}|=\omega)$,\\

then $p_\gamma \Vdash |\{n \in I(\check{\mathcal A|\gamma}, \dot C_\gamma, \check f_\gamma): \forall \xi \in F\cup \{\check\beta\}\,(\dot C_\gamma(n)\cap A_\xi\neq \emptyset) \}|=\omega,$\\

\noindent where $\mathcal A|\gamma=(A_\xi: \xi<\gamma)$
	\begin{proof} Enumerate $\{(q, \tau): \tau \text{ is a nice name for a subset of } \check \omega, q\in \mathbb C, q\Vdash |\tau|=\omega\}$ as $\{(q_\alpha, \tau_\alpha ): \omega\leq \alpha<\omega_1\}$.

		Let $\{A_n: n \in \omega\}$ be an almost disjoint family in $M$. Given $\beta \in [\omega, \omega_1^M)$, suppose we have defined $(A_\xi: \xi <\beta) \in M$ such that:
		
		\begin{enumerate}[label=\alph*)]
			\item $\{A_\xi: \xi<\beta\}$ is an almost disjoint family, and
			\item for all $\beta'<\beta$, for every infinite $\gamma \leq\beta'$, for every $F \in [\beta']^{<\omega}$,\\
            
            if $ \forall J \in [\beta']^{<\omega}\,( p_\gamma \Vdash |\{n \in I(\check{\mathcal A|\gamma}, \dot C_\gamma, \check f_\gamma): \forall \xi \in \check F\, (\dot C_\gamma(n)\cap A_\xi\neq \emptyset) \text{ and }  \dot C_\gamma(n)\setminus \bigcup_{\xi \in J}{A_\xi}\neq \emptyset\}|=\omega)$,\\
			
			then $p_\gamma \Vdash |\{n \in I(\check{\mathcal A|\gamma}, \dot C_\gamma, \check f_\gamma): \forall \xi \in \check F\cup \{\check \beta'\}\,(\dot C_\gamma(n)\cap A_\xi\neq \emptyset) \}|=\omega,$
			
			\item if $\forall \xi<\beta\, q_\beta \Vdash |\tau_\beta\cap \check A_\eta|<\check \omega$ then $\forall n \in \omega \forall s\leq q_\beta \exists r\leq s\exists m\geq n(m \in A_\beta \text{ and } r\Vdash \check m \in \tau_\beta)$. 
		\end{enumerate}
		
		Notice that a) and c) implies that the almost disjoint family will be indestructible for Cohen extensions (see  \cite{kunen1980set}). We must define $A_\beta$.
		
		Working in $M$, suppose $\{(r, F, \gamma, l): l \in \omega, r\leq p_\gamma, F\in [\beta]^{<\omega}, \gamma<\beta', \forall J \in [\beta]^{<\omega}\,(p_\gamma \Vdash |\{n \in I(\check{\mathcal A|\gamma}, \dot C_\gamma, \check f_\gamma): \forall \xi \in \check F\, (\dot C_\gamma(n)\cap A_\xi\neq \emptyset) \text{ and } \dot C_\gamma(n)\setminus \bigcup_{\xi \in J}{A_\xi}\neq \emptyset\}|=\omega)$ is nonempty and enumerate it as $\{(r_m, F_m, \gamma_m, l_m): m \in \omega\}$.
		
		For every $m \in \omega$, there exists $s_m\leq r_m$, $n_m, k_m > l_m$ such that $s_m\Vdash \check n_m \in I(\check{\mathcal A|{\gamma_m}}, \dot C_{\gamma_m}, \check f_{\gamma_m}), \forall \xi \in \check F_m \,\dot C_{\gamma_m}(\check n_m)\cap \check A_\xi\neq \emptyset \text{ and } \check k_m\in \dot C_{\gamma_{m}}(n_m)\setminus \bigcup_{i\leq m} \check A_{\gamma_i}.$
		
		$k_m$ may be picked greater than $l_m$ since $r_m\leq p_{\gamma_m}\Vdash (\text{the }\dot C_{\gamma_m}(n)$'s are pairwise disjoint). Let $A^0_\beta=\{k_m: m \in \omega\}$. If the preceding set is empty, just let $A_\beta^0$ be an infinite subset of $\omega$ almost disjoint from every $A_\xi$ $(\xi <\beta)$. If we let $A_\beta=A_\beta^0$, clearly $(A_\xi: \xi<\beta+1)$ satisfies a) and b). As in \cite{kunen1980set}, working in M we construct $A_\beta^1$ almost disjoint from $A_\xi$ for every $\xi<\beta$ which makes c) hold. Let $A_\beta=A_\beta^0\cup A_\beta^1$ and we are done.

	\end{proof}
	
	\begin{theorem}Let $M$ be a c.t.m. for ZFC+CH. There exists an indestructible MAD family for Cohen extensions such that in every Cohen extension, $\CL(\Psi(\mathcal A))$ is pseudocompact.
	\end{theorem}
	
	\begin{proof}Let $((\dot C_\alpha, p_\alpha): \omega\leq \alpha<\omega_1^M)$, $(f_\gamma: \omega\leq \gamma<\omega_1)$ and $\mathcal A=\{A_\alpha: \alpha<\omega_1^M\}$ be as in the previous theorem. We claim that in every Cohen extension, $\CL(\Psi(\mathcal A))$ is pseudocompact. Suppose $\kappa$ is an infinite cardinal and that $G$ is $\mathbb C_k$-generic over $M$. Suppose by contradiction that, in $M[G]$,  $\CL(\Psi(\mathcal A))$ is not pseudocompact. Then, in $M[G]$, there exists a sequence of finite nonempty pairwise disjoint subsets of $\omega$, $C:\omega\rightarrow [\omega]^\omega$, with no accumulation point in $\CL(\Psi(\mathcal A))$. By Lemma \ref{lemma3}, for every $J \in [\omega_1^M]^{<\omega}$, $\{n \in \omega: C_n\subseteq \bigcup_{\alpha \in J} A_\alpha\}$ is finite.
		
		Let $S\subseteq \kappa$ be infinite countable such that $Q_0=\text{Fn}(S, 2)$ and $H_0=G\cap Q$ are such that $C \in M[H_0]$. Let $Q_1=\text{Fn}(\kappa\setminus I)\cap G$ and $H=G\cap Q_1$. Then $M[H_0][H_1]=M[G]$. Since $Q_0\approx \mathbb C$, there exists a generic filter $K$ over $\mathbb C$ such that $M[K]=M[H_0]$.

		There exists $\dot C \in M^{\mathbb C}$ such that $\dot C_K=C$ and such that $\dot C$ is a nice name for a subset of $\check{\omega\times [\omega]^{<\omega}}$. Now working in $M[K]$, there exists $p \in K$ such that:
		\begin{enumerate}
			\item $p \Vdash \dot C:\check \omega\rightarrow \check{[\omega]^{<\omega}\setminus \{\emptyset\}} \text{ is a sequence of pairwise disjoint sets}$, and
			\item $\forall J \in [\omega_1^M]^{<\omega}\, (p \Vdash \, |\{n \in \check \omega: \dot C_n\subseteq \bigcup_{\alpha \in \check J} A_\alpha\}|<\omega).$
		\end{enumerate}
		
		So, there exists $\gamma \in [\omega, \omega_1)$ such that $(\dot C, p)=(\dot C_\gamma, p_\gamma)$. Working on $M[G]$, we aim to get a contradiction by appling lemma \ref{lemma4} by letting $I$ be $I(\mathcal A|\gamma, C, f_\gamma)$. We already know that (i) holds. Since having the SFIP is absolute for transitive models of ZFC, we may verify (ii) holds on $M[K]$. So let $F\in [[\gamma, \omega_1)]^{<\omega}$ and write $P=\{\alpha_0, \dots, \alpha_l\}$ with $\alpha_0<\dots<\alpha_l$. For $i\leq l$, let $P_i=\{\alpha_0, \dots, \alpha_i\}$. We proceed by induction for $i\leq l$ to show that:
		
		$$p_\gamma \Vdash |\{n \in I(\check{\mathcal A|\gamma}, \dot C_\gamma, \check f_\gamma): \forall \xi \in \check P_i\, (\dot C_\gamma(n)\cap A_\xi\neq \emptyset)\}|=\omega.$$
		
		which will complete the proof.

        To see that it holds for $i=0$, let $\beta=\alpha_0$. Then $\forall J \in [\beta]^{<\omega}\,( p_\gamma \Vdash |\{n \in I(\check{\mathcal A|\gamma}, \dot C_\gamma, \check f_\gamma): \forall \xi \in \check \emptyset\, (\dot C_\gamma(n)\cap A_\xi\neq \emptyset) \text{ and } \dot C_\gamma(n)\setminus \bigcup_{\xi \in J}{A_\xi}\neq \emptyset\}|=\omega)$	is logically equivalent to $\forall J \in [\beta]^{<\omega}\,( p_\gamma \Vdash |\{n \in I(\check{\mathcal A|\gamma}, \dot C_\gamma, \check f_\gamma): \dot C_\gamma(n)\setminus \bigcup_{\xi \in J}{A_\xi}\neq \emptyset\}|=\omega)$ which holds, by (2). Therefore:
		
		$$p_\gamma \Vdash |\{n \in I(\check{\mathcal A|\gamma}, \dot C_\gamma, \check f_\gamma): \forall \xi \in \{\check \alpha_0\}\, (\dot C_\gamma(n)\cap A_\xi\neq \emptyset)\}|=\omega$$
		
		Now, suppose we have proved or claim for some $i<l$. We prove it for $i+1$. This time, let $\beta=\alpha_{i+1}$. We already know that $p_\gamma \Vdash |\{n \in I(\check{\mathcal A|\gamma}, \dot C_\gamma, \check f_\gamma): \forall \xi \in \check P_i\, (\dot C_\gamma(n)\cap A_\xi\neq \emptyset)\}|=\omega.$ Again, by 2., it follows that	$\forall J \in [\beta]^{<\omega}\,( p_\gamma \Vdash |\{n \in I(\check{\mathcal A|\gamma}, \dot C_\gamma, \check f_\gamma): \forall \xi \in \check P_i\, (\dot C_\gamma(n)\cap A_\xi\neq \emptyset) \text{ and } \dot C_\gamma(n)\setminus \bigcup_{\xi \in J}{A_\xi}\neq \emptyset\}|=\omega)$, which implies that:
		
		$$p_\gamma \Vdash |\{n \in I(\check{\mathcal A|\gamma}, \dot C_\gamma, \check f_\gamma): \forall \xi \in \check P_i\cup \{\check \alpha_{i+1}\}\, (\dot C_\gamma(n)\cap A_\xi\neq \emptyset)\}|=\omega,$$ completing the proof.

	\end{proof}
	
\end{theorem}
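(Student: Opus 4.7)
The plan is a standard transfinite recursion of length $\omega_1^M$ in $M$, merging the Kunen-style diagonalization for Cohen-indestructibility with a new bookkeeping component that directly enforces the displayed clause. Start from any countable almost disjoint family $\{A_n: n \in \omega\}\in M$. At stage $\beta \in [\omega, \omega_1^M)$ we are given $(A_\xi: \xi < \beta)$ satisfying the analogous statement restricted to $\beta$; we must define $A_\beta$ so that (a) $\{A_\xi: \xi \leq \beta\}$ remains almost disjoint, (b) the displayed clause holds at $\beta$, and (c) the Kunen diagonalization against $(q_\beta, \tau_\beta)$ is met. CH is used precisely to list, at each stage, all the relevant "threats" in order type $\omega$.

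At stage $\beta$, fix in $M$ an enumeration $(\xi_j: j \in \omega)$ of $\beta$ and list as $((r_m, F_m, \gamma_m, l_m): m \in \omega)$ all quadruples with $\gamma_m \leq \beta$ infinite, $F_m \in [\beta]^{<\omega}$, $r_m \leq p_{\gamma_m}$, $l_m \in \omega$ such that the universal hypothesis of the clause holds with $\gamma_m, F_m$ in place of $\gamma, F$ (treating an empty list trivially). For each $m$, apply that hypothesis with $J := F_m \cup \{\xi_j: j \leq m\}$: since $r_m \leq p_{\gamma_m}$ forces the displayed set to be infinite, by extending in $\mathbb{C}$ we find $s_m \leq r_m$ and natural numbers $n_m, k_m > l_m$ with
\[
  s_m \Vdash \check n_m \in I(\check{\mathcal A|\gamma_m}, \dot C_{\gamma_m}, \check f_{\gamma_m}) \,\wedge\, \bigwedge_{\xi \in F_m}\!\bigl(\dot C_{\gamma_m}(\check n_m) \cap \check A_\xi \neq \emptyset\bigr) \,\wedge\, \check k_m \in \dot C_{\gamma_m}(\check n_m) \setminus {\textstyle\bigcup_{\xi \in J}\check A_\xi}.
\]
Because the $\dot C_{\gamma_m}(n)$'s are forced pairwise disjoint, only finitely many are forced bounded by any given $N$, so $k_m$ can indeed be pushed above $l_m$. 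Set $A_\beta^0 := \{k_m : m \in \omega\}$; by the choice of $J$ we get $A_\beta^0 \cap A_{\xi_j} \subseteq \{k_m : m < j\}$ for every $j$, so $A_\beta^0$ is almost disjoint from every $A_\xi$, $\xi < \beta$. Then adjoin a set $A_\beta^1$ produced by the standard Kunen recipe in \cite{kunen1980set} that meets $\tau_\beta$ cofinally below $q_\beta$ (when needed) while staying almost disjoint from every $A_\xi$, $\xi < \beta$, and from $A_\beta^0$, and define $A_\beta := A_\beta^0 \cup A_\beta^1$.

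It remains to verify the clause at $\beta$. Suppose $\gamma \leq \beta$ is infinite and $F \in [\beta]^{<\omega}$ satisfies the hypothesis. If the conclusion failed, some $r \leq p_\gamma$ and some $l \in \omega$ would witness that $r$ forces no $n > l$ in the $I$-set satisfies $\forall \xi \in F \cup \{\check\beta\}\,(\dot C_\gamma(n) \cap A_\xi \neq \emptyset)$. But the hypothesis is inherited from $p_\gamma$ to $r$, so $(r, F, \gamma, l)$ appears in our enumeration as $(r_{m^*}, F_{m^*}, \gamma_{m^*}, l_{m^*})$. Then $s_{m^*} \leq r$ forces $n_{m^*} > l$ in the $I$-set with $\dot C_\gamma(n_{m^*}) \cap A_\xi \neq \emptyset$ for all $\xi \in F$ and $k_{m^*} \in \dot C_\gamma(n_{m^*}) \cap A_\beta$, contradicting the choice of $r$. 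The main obstacle is the witness-extraction step: one must simultaneously decide a value $n_m$ in $I(\mathcal A|\gamma_m, \dot C_{\gamma_m}, f_{\gamma_m})$, ensure $\dot C_{\gamma_m}(n_m)$ meets each $A_\xi$ for $\xi \in F_m$, and select an element $k_m$ of $\dot C_{\gamma_m}(n_m)$ that avoids the growing union $\bigcup_{\xi \in J} A_\xi$, all below a prescribed condition $r_m$. This is exactly where the universal "$\forall J \in [\beta]^{<\omega}$" quantifier in the hypothesis is indispensable: it lets $J$ absorb both the almost-disjointness book-keeping set $\{\xi_j : j \leq m\}$ and whatever finite set has appeared from prior stages, making the three requirements compatible.
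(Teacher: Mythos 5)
Your proposal is correct and follows essentially the same construction as the paper's proof: the same transfinite recursion, the same countable enumeration of quadruples $(r,F,\gamma,l)$ at stage $\beta$, the same exploitation of the universal quantifier over $J$ to extract witnesses $k_m>l_m$ avoiding finitely many earlier $A_\xi$'s, and the same merge with Kunen's diagonalization for Cohen-indestructibility. Your bookkeeping choice $J=F_m\cup\{\xi_j:j\leq m\}$ with $(\xi_j)$ enumerating $\beta$ is in fact a cleaner (and strictly correct) version of the paper's $\bigcup_{i\leq m}A_{\gamma_i}$, and you make explicit the verification of the inductive clause that the paper dismisses as ``clearly''.
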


Since in every Cohen model satisfying $\neg$ CH, $\mathfrak h<\mathfrak c$, this example gives a different proof of Corollary \ref{result}.

\section{Conclusions}
We gave some examples of MAD families of cardinality $\omega_1$ whose hyperspaces of their psi spaces are pseudocompact, contributing towards Problem 1. No examples of cardinality lesser than $\mathfrak c$ were known. Both examples consists of models where there exists MAD families $\mathcal A_1$ and $\mathcal A_2$ such that $\CL(\Psi(\mathcal A_1))$ is pseudocompact and such that $\CL(\Psi(\mathcal A_2))$ is not, what was also not known. However, Problem 1 is still open. We ask another question in this direction:

\begin{prob}Is every hyperspace of a psi space of a MAD family of minimum cardinality pseudocompact? In particular, if the MAD family has cardinality $\omega_1$?
\end{prob}

Of course, if $\mathfrak a>\omega_1$, then the answer to the second question is trivially true.

\begin{prob}Is there a MAD family of cardinality $\mathfrak c$ in the Cohen model for which the  hyperspace of its psi is pseudocompact? What about in the iterated forcing model that adds an $\omega_1$ MAD family?
\end{prob}

\section{Acknowledgements}
The authors thanks FAPESP for the received support (process numbers	
2016/26216-8, regular research project, and 2017/15502-2, PhD Project).

\bibliographystyle{plain}
\normalsize
\baselineskip=17pt
\bibliography{sample}

\end{document}